\newtheorem{theorem}{Theorem}
\newtheorem{lemma}{Lemma}
\newtheorem{assumption}{Assumption}
\title{\LARGE \bf
Unbiased Extremum Seeking for PDEs
}
\author{Cemal Tugrul Yilmaz, Mamadou Diagne and Miroslav Krstic
\thanks{C. T. Yilmaz, M. Diagne and M. Krstic are with Department of Mechanical and Aerospace Engineering, University of California, San Diego, La Jolla, CA, USA.
        {\tt\small cyilmaz@ucsd.edu, mdiagne@ucsd.edu, krstic@ucsd.edu}}
}
\begin{document}

\maketitle
\thispagestyle{empty}
\pagestyle{empty}

\begin{abstract}

There have been recent efforts that combine seemingly disparate methods, extremum seeking (ES) optimization 
and partial differential equation (PDE) backstepping, to address the problem of model-free optimization with 
PDE actuator dynamics. In contrast to prior PDE-compensating ES designs, which only guarantee local stability 
around the extremum, we introduce unbiased ES that compensates for delay and diffusion PDE dynamics while 
ensuring exponential and unbiased convergence to the optimum. Our method leverages exponentially decaying/growing signals 
within the modulation/demodulation stages and carefully selected design parameters. The stability analysis 
of our designs relies on a state transformation, infinite-dimensional averaging, local exponential stability 
of the averaged system, local stability of the transformed system, and local exponential stability of the 
original system. Numerical simulations are presented to demonstrate the efficacy of the developed designs.

\end{abstract}

\section{Introduction} \label{intro}
In the context of real-time optimization, extremum seeking (ES) emerges as an effective optimization technique with a century-old history \cite{scheinker2024100}, originating from the work of \cite{leblanc1922electrification}. Its popularity continues to grow over time because it is easy to use and does not require a detailed model of the cost function. The expanding theoretical groundwork in ES has led to diverse applications, ranging  from exoskeletons \cite{kumar2020extremum} to quantum computers \cite{abbasgholinejad2023extremum}.
However, much of the literature on ES primarily focuses on optimizing systems described by ordinary differential equations (ODEs). However, the model of many physical systems, such as drilling systems, reactors, batteries, and continuum robots, are more complex and these systems are better described by partial differential equations (PDEs).

There exist various optimization challenges in infinite dimensional systems. In certain systems like network control systems, and cyber-physical systems, time delays between control action and system response are inevitable. This issue requires a delay-aware ES, as designs that ignore the delay may lead to instability, especially with large delays. In oil drilling systems \cite{aarsnes2019extremum}, the input dynamics involve a cascade of wave PDE and ODE, and the objective is to maximize the rate of penetration (ROP) up to a certain threshold known as the foundering point. Beyond this point, ROP starts decreasing, leading to energy wastage and potential cutter damage. In tubular reactors \cite{hudon2008adaptive}, which are defined by coupled hyperbolic PDEs, the goal is to seek a reactor temperature profile maximizing the reactor exit concentration. Another intriguing challenge arises in pool boiling systems \cite{alessandri2020stabilization}. As heat flux increases during boiling, bubbles form and rise to the surface. Beyond a critical heat flux, bubbles cease to rise, and a vapor film covers the heater surface, acting as an insulator. This leads to a significant temperature increase above the heater material's melting point, causing physical burnout of the heater. ES can be adapted to stabilize the heat flux at the unknown optimal level. 

It is important to note that existing ES designs \cite{aarsnes2019extremum, hudon2008adaptive} focus on optimization at the steady state and do not fully account for the PDE dynamics. This limits their applicability to PDE systems with slow transient and motivates the development of PDE compensated ES designs. Recent efforts have addressed this gap, focusing on ES designs for functions with input/output delays \cite{oliveira2016extremum} or those with inputs governed by diffusion PDEs \cite{feiling2018gradient}, wave PDEs \cite{oliveira2021extremum}, and PDE-PDE cascades \cite{oliveira2021extremum2}. For a more comprehensive treatment of the problem, refer to the monograph \cite{oliveira2022extremum}. However, PDE-compensating ES designs in \cite{feiling2018gradient, oliveira2021extremum, oliveira2021extremum2, oliveira2022extremum} can only ensure convergence to a neighborhood of the optimum point due to the active perturbing signal in the design, resulting in suboptimal performance.

This paper is an extension of our earlier work \cite{yilmaz2023exponential}, which introduces the unbiased extremum seeker (uES) for maps without PDE dynamics, the first ES design demonstrating exponential and unbiased convergence to an unknown optimum at a user-defined rate. Expanding upon \cite{yilmaz2023exponential}, we introduce ES designs achieving prescribed-time and unbiased convergence to the optimum \cite{yilmaz2023press}, and perfectly tracking time-varying optimum \cite{yilmaz2024perfect}.
In this paper, we present two distinct uES designs: one can handle arbitrarily long and known time delays, while the other compensates for diffusion PDEs. The designs consist of a PDE compensator, a perturbation signal with exponentially decaying amplitude (to eliminate steady-state oscillation), demodulation signals with exponentially growing amplitude and properly selected design parameters (to ensure unbiased convergence).

This paper is structured as follows. Section \ref{probsta} presents problem formulation. Section \ref{unbESdelay} and Section \ref{unbESdiff} introduce unbiased ES with delay compensator and unbiased ES with diffusion PDE compensator, respectively, along with formal stability analysis. Numerical results are presented in Section \ref{numsim}, followed by a conclusion in Section \ref{conclusion}.

\textit{Notation:} We denote the 
Euclidean norm by $|\cdot|$. The partial derivatives of a function $u(x,t)$ are denoted by $\partial_x u(x,t)=\partial u(x,t)/ \partial x$, $\partial_t u(x,t)=\partial u(x,t)/ \partial t$. 
The spatial $L_2[0,D]$ norm of $u(x,t)$ is denoted by $||u(\cdot, t)||^2=\int_0^D u^2(x,t)dx$.

\section{Problem Statement} \label{probsta}
We consider the optimization problem given by
\begin{align}
    \min_{\theta \in \mathbb{R}} Q(\theta), \label{maxQ}
\end{align}
where $\theta \in \mathbb{R}$ represents the input, $Q \in \mathbb{R} \to \mathbb{R}$ is an unknown smooth function. We introduce the following assumption regarding the unknown static map $Q(\cdot)$.

\begin{assumption} \label{assquad}
The unknown static map is characterized by the following quadratic form
\begin{align}
    Q(\theta)={}y^*+\frac{H}{2}(\theta-\theta^*)^2, \label{quadQ}
\end{align}
where $y^* \in \mathbb{R}$ and $\theta^* \in \mathbb{R}$ denote the unknown optimal output and input values, respectively, and $H > 0$ represents the unknown Hessian of the static map $Q(\theta)$.
\end{assumption}

In \cite{yilmaz2023exponential}, we design a multivariable ES that achieves unbiased and exponential convergence to the optimum,
assuming the absence of actuator dynamics. This work addresses a more challenging and application-relevant optimization problem by considering the same objective function \eqref{maxQ} but incorporating actuator dynamics modeled by a delay and diffusion PDE.
Our current objective is to develop a scalar unbiased ES that effectively compensates for PDE dynamics and exponentially guides the input $\theta(t)$ towards the optimum $\theta^*$.

\section{Unbiased ES with Delay} \label{unbESdelay}
\begin{figure}[t]
    \centering
    \includegraphics[width=\columnwidth]{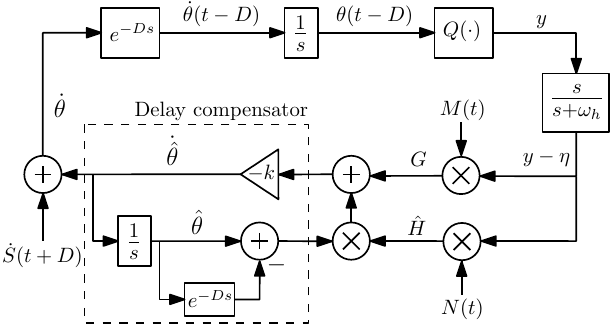} 
    \caption{Unbiased ES with delay compensator. The design employs exponentially growing multiplicative  signals, $M(t)=\frac{2}{a}e^{\lambda t}\sin(\omega t)$, and $N(t)=-\frac{8}{a^2}e^{2\lambda t}\cos(2\omega t)$, as well as  exponentially decaying dither signal $\dot{S}(t+D)=\frac{d}{dt}\left(e^{-\lambda (t+D)} a  \sin(\omega (t+D))\right)$ to achieve exponential and unbiased convergence to the optimum $\theta^*$ at the rate of the user-defined $\lambda>0$. }
    \label{ESDeBlock}
\end{figure}
In this section, we consider the scenario where the output $y(t)$ is subject to a known and constant delay $D \in \mathbb{R}$, expressed as
\begin{align}
    y(t)=Q(\theta(t-D)), \quad t \in [0, \infty). \label{output}
\end{align}
Note that the static map $Q(\theta)$ enables the representation of the overall delay $D$ as the sum of individual components, denoted as $D_u$ and $D_y$, corresponding to the delays in the actuation and measurement paths, respectively. 
Fig. \ref{ESDeBlock} illustrates the closed-loop unbiased ES with delay compensator. Before delving into the estimator design, we introduce the signals presented in Fig. \ref{ESDeBlock} in the following subsection.

\subsection{Excitation signals and gradient/Hessian estimates}
Let us define the following parameter estimate
\begin{align}
    \hat{\theta}(t)=\theta(t)-S(t+D), \label{thetahatdef}
\end{align}
with the perturbation signal 
\begin{align}
    S(t+D)={}e^{-\lambda (t+D)} a  \sin(\omega (t+D)),  \label{Sdef}
\end{align}
where $\lambda>0$ is the decay rate of the perturbation signal, $a \in \mathbb{R}$ is the perturbation amplitude, $\omega >0$ is the probing frequency. Let us define the delay-free and delayed parameter estimation error variables
\begin{align}
    \tilde{\theta}(t)={}&\hat{\theta}(t)-\theta^*, \quad \tilde{\theta}(t-D)={}\hat{\theta}(t-D)-\theta^*. \label{delfdelayed}
\end{align}
Applying the technique in \cite{krstic2009delay}, we represent the signals \eqref{delfdelayed} through the transport PDE as
\begin{align}
    \tilde{\theta}(t-D)={}&\bar{u}(0,t), \label{transPDE1} \\
    \partial_t \bar{u}(x,t)={}&\partial_x \bar{u}(x,t),    \\
    \bar{u}(D,t)={}&\tilde{\theta}(t) \label{transPDE3}
\end{align}
for $ x\in (0, D)$. The solution of this PDE is given by
\begin{align}
    \bar{u}(x, t)={}&\tilde{\theta}(t+x-D). \label{ubarsol}
\end{align}
We compute the estimate of the gradient and Hessian as follows
\begin{align}
    G(t)={}&M(t)(y(t)-\eta(t)), \label{Gdef} \\
    \hat{H}(t)={}&N(t)(y(t)-\eta(t)), \label{Hdef}
\end{align}
where the multiplicative excitation signals are given by
\begin{align}
    M(t)={}&\frac{2}{a} e^{\lambda t}\sin(\omega t), \quad  
    N(t)=-\frac{8}{a^2} e^{2\lambda t} \cos(2\omega t)\label{MNdef}
\end{align}
and $\eta(t)$ is governed by
\begin{align}
    \dot{\eta}(t)={}-\omega_h \eta(t)+\omega_h y(t). \label{etadyn}
\end{align}
We use exponentially decaying signals in the modulation stage, as defined in \eqref{Sdef}, and exponentially growing signals in the demodulation stage, as defined in \eqref{Gdef} and \eqref{Hdef}. The important aspect of our design lies in the fact that the high-pass filtered state $y-\eta$ exhibits exponential decay to zero at the rate of $2\lambda$. 
This crucial feature ensures that despite the growing amplitudes in \eqref{MNdef}, the estimates in \eqref{Gdef} and \eqref{Hdef} remain bounded.

\subsection{ Parameter estimator design and error dynamics}
The time derivative of \eqref{transPDE1}--\eqref{transPDE3} is given by
\begin{align}
    \dot{\hat{\theta}}(t-D)={}&{u}(0,t), \label{transDPDE1} \\
    \partial_t {u}(x,t)={}&\partial_x {u}(x,t), \\
    {u}(D,t)={}&\dot{\hat{\theta}}(t), \label{transDPDE3}    
\end{align}
by noting $\dot{\tilde{\theta}}(t)=\dot{\hat{\theta}}(t)$ from \eqref{delfdelayed}. Following the methodology presented in \cite{krstic2009delay}, we consider the following backstepping transformation
\begin{align}
    w(x,t)={}&u(x,t)+\bar{k}\left(\tilde{\theta}(t-D)+\int_0^x u(\sigma,t)d\sigma \right) \label{tranfw}
\end{align}
to convert the system \eqref{transDPDE1}--\eqref{transDPDE3} into the target system
\begin{align}
    \dot{\tilde{\theta}}(t-D)={}&-\bar{k}\tilde{\theta}(t-D)+w(0,t), \label{targPDE1} \\
    \partial_t {w}(x,t)={}&\partial_x {w}(x,t), \\
    {w}(D,t)={}&0, \label{targPDE3}       
\end{align}
with the gain $\bar{k}>0$. 
By substituting $x=D$ into \eqref{tranfw} and considering \eqref{transDPDE3} and \eqref{targPDE3}, we obtain the update law
\begin{align}
    \dot{\hat{\theta}}(t)={}&-\bar{k}\left( \tilde{\theta}(t-D)+\int_0^D u(\sigma,t)d\sigma \right), \nonumber \\
    ={}&-\bar{k} \tilde{\theta}(t-D)-\bar{k} \left(\hat{\theta}(t)-\hat{\theta}(t-D) \right), \label{UpdLaw}
\end{align}
where we use the property $u(\sigma,t)=\partial_t \bar{u}(\sigma,t)=\partial_x \bar{u}(\sigma,t)$,  use the solution \eqref{ubarsol}, and recall \eqref{delfdelayed}.  
We obviously cannot use \eqref{UpdLaw} since the error $\tilde{\theta}(t-D)$ is not measured.
To overcome this limitation, we define $\bar{k}=kH$, where $H>0$ represents the unknown Hessian, and the user specifies the positive gain $k$. Then, we replace the signals $H \tilde{\theta}(t-D)$ and $H$ with their respective estimates, $G(t)$ from \eqref{Gdef} and $\hat{H}(t)$ from \eqref{Hdef}. This leads to the following implementable version of \eqref{UpdLaw}
\begin{align}
    \dot{\hat{\theta}}(t)={}&-kG(t)-k\hat{H}(t)\left(\hat{\theta}(t)-\hat{\theta}(t-D) \right). \label{delayuES}
\end{align}
The design parameters should satisfy the following conditions
\begin{align}
    \lambda < \frac{\omega_h}{2}, \qquad k>\frac{\lambda}{H}. \label{paramcond}
\end{align}
The essence of these conditions is that the adaptation (learning) rate should surpass the decay rate of the perturbation (exploration) signal.

In view of the transformation,
\begin{align}
    \tilde{\eta}(t)={}&\eta(t)-y^*, \label{etatilde}
\end{align}
we write the closed-loop system in the form
\begin{align}
    \tilde{\theta}(t-D)={}&\bar{u}(0,t), \label{closeddelay1} \\
    \partial_t \bar{u}(x,t)={}&\partial_x \bar{u}(x,t), \\
    \bar{u}(D,t)={}&\tilde{\theta}(t), \\
    \dot{\tilde{\theta}}(t)={}&-kG(t)-k\hat{H}(t)(\tilde{\theta}(t)-\tilde{\theta}(t-D)), \label{thtilclosedelay} \\
    \dot{\tilde{\eta}}(t)={}& -\omega_h \tilde{\eta}(t)+\omega_h  \left(y(t)-y^*\right), \label{closeddelay2}
\end{align}
where \eqref{Gdef} and \eqref{Hdef} are rewritten as
\begin{align}
    G(t)={}&\frac{2}{a}e^{\lambda t}\sin(\omega t)\left(y(t)-y^*-\tilde{\eta}(t)\right), \label{Grewrite} \\
    \hat{H}(t)={}&-\frac{8}{a^2}e^{2\lambda t} \cos(2\omega t)\left(y(t)-y^*-\tilde{\eta}(t)\right). \label{Hrewrite}
\end{align}
Recalling \eqref{quadQ}--\eqref{delfdelayed}, the output \eqref{output} is rewritten as
\begin{align}
    y(t)={}&y^*+\frac{H}{2}\left(\tilde{\theta}(t-D)+e^{-\lambda t}a\sin(\omega t)\right)^2. \label{outputdelay}
\end{align}

\subsection{Stability analysis}
The main theorem is stated as follows.
\begin{theorem} \label{theodelay}
Let Assumption \ref{assquad} holds and the parameters satisfy \eqref{paramcond}. Then,
there exists $\bar{\omega}$ and for any $\omega > \bar{\omega}$, the closed-loop system \eqref{closeddelay1}--\eqref{closeddelay2} is exponentially stable at the origin in the sense of the norm
\begin{align}
    \left( ||\bar{u}(\cdot, t)||^2+|\tilde{\theta}(t)|^2+|\tilde{\eta}(t)|^2 \right)^{1/2}. \label{normintheo}
\end{align}
Furthermore,  
the input $\theta(t)$ and output $y(t)$ exponentially converge to $\theta^*$ and $y^*$, respectively.
\end{theorem}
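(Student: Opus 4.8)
The plan is to follow the chain previewed in the abstract: a state transformation that removes the explicit exponential time-weights, infinite-dimensional averaging of the resulting system, exponential stability of the average system, and transfer of this property back to \eqref{closeddelay1}--\eqref{closeddelay2}. The transformation exploits the fact, noted after \eqref{etadyn}, that the high-pass-filtered signal $y(t)-\eta(t)=(y(t)-y^*)-\tilde{\eta}(t)$ decays at rate $2\lambda$ whenever $\tilde{\theta}$ decays faster than $\lambda$ --- which the conditions \eqref{paramcond}, $k>\lambda/H$ and $2\lambda<\omega_h$, guarantee. Introducing the rescaled variable $p(t)=e^{2\lambda t}(y(t)-y^*-\tilde{\eta}(t))$, which remains bounded, the estimates in \eqref{thtilclosedelay} become $G(t)=\tfrac{2}{a}e^{-\lambda t}\sin(\omega t)\,p(t)$ and $\hat{H}(t)=-\tfrac{8}{a^2}\cos(2\omega t)\,p(t)$: the exponential growth of $M,N$ in \eqref{MNdef} is exactly cancelled by the $2\lambda$-decay of $y-\eta$, leaving coefficients periodic in $\omega t$. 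On the coupled system formed by the transport PDE for $\bar{u}$, the $\tilde{\theta}$-equation, the $p$-dynamics, and the filter \eqref{closeddelay2}, $1/\omega$ then plays the role of the small averaging parameter.

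Averaging over one period and using \eqref{outputdelay}, the leading contributions $\langle 2H\tilde{\theta}(t-D)\sin^2(\omega t)\rangle=H\tilde{\theta}(t-D)$ and $\langle -4H\cos(2\omega t)\sin^2(\omega t)\rangle=H$ yield $G_{\mathrm{av}}=H\tilde{\theta}(t-D)=H\bar{u}(0,t)$ and $\hat{H}_{\mathrm{av}}=H$, while the zero-mean part of the dither contributes only an exogenous term decaying at $2\lambda$ --- the mechanism that removes the steady-state bias. The averaged update law then collapses to $\dot{\tilde{\theta}}_{\mathrm{av}}=-kG_{\mathrm{av}}-k\hat{H}_{\mathrm{av}}(\tilde{\theta}_{\mathrm{av}}-\tilde{\theta}_{\mathrm{av}}(t-D))=-kH\,\tilde{\theta}_{\mathrm{av}}$, exactly the ideal behavior behind \eqref{UpdLaw}.

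For exponential stability of the average system I would use the backstepping transformation \eqref{tranfw}: the averaged gains make the target boundary condition \eqref{targPDE3} force $w(0,t)\equiv 0$ for $t\ge D$, so the averaged dynamics reduce to $\dot{\tilde{\theta}}_{\mathrm{av}}=-kH\tilde{\theta}_{\mathrm{av}}$ with rate $kH>\lambda$; a delay-backstepping Lyapunov functional such as $V=\tfrac12\bar{u}(0,t)^2+\tfrac{b}{2}\int_0^D(1+x)w(x,t)^2\,dx$ augmented by a quadratic term in $\tilde{\eta}$, together with the norm equivalence between the $u$- and $w$-systems from \cite{krstic2009delay}, gives exponential decay of $(\|\bar{u}(\cdot,t)\|^2+|\tilde{\theta}(t)|^2+|\tilde{\eta}(t)|^2)^{1/2}$. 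The averaging theorem then supplies a $\bar{\omega}$ so that, for $\omega>\bar{\omega}$, the transformed system is locally exponentially stable; since $\tilde{\eta}=(y-y^*)-e^{-2\lambda t}p$ with $e^{-2\lambda t}$ decaying, this transfers to local exponential stability of \eqref{closeddelay1}--\eqref{closeddelay2} in \eqref{normintheo}. Convergence of the physical signals follows from $\theta(t)=\tilde{\theta}(t)+\theta^*+S(t+D)$ with $S(t+D)\to 0$ by \eqref{Sdef}, and from \eqref{outputdelay}, giving $\theta\to\theta^*$ and $y\to y^*$.

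The main obstacle I anticipate is the rigorous justification of the infinite-dimensional averaging in the presence of the residual exponential weights: one must verify that the quadratic-in-$\tilde{\theta}$ remainders --- which still carry uncancelled factors $e^{\lambda t}$ in $G$ and $e^{2\lambda t}$ in $\hat{H}$ --- are genuinely higher order and dominated by the state decay, so that they spoil neither the average computation nor the transfer of stability, and that the time-varying change of variables is boundedly invertible along trajectories. This is precisely where $k>\lambda/H$ is indispensable: it forces $\tilde{\theta}$ to decay strictly faster than the weights grow, keeping every weighted remainder bounded and decaying, and thereby legitimizing both the averaging estimate and the inversion of the transformation.
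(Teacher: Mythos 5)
Your high-level plan matches the paper's (transformation, infinite-dimensional averaging, stability of the average system, Hale--Lunel back-transfer), but there is a genuine gap at the crucial step: your change of variables is incomplete, and as a result the system you propose to average is not time-periodic, so the averaging theorem you invoke does not apply to it. You only rescale the filtered output into $p(t)=e^{2\lambda t}(y(t)-y^*-\tilde{\eta}(t))$ while leaving $\tilde{\theta}$, $\bar{u}$ and $\tilde{\eta}$ unweighted; the resulting dynamics then still carry coefficients $e^{-\lambda t}$ (in $G$) and, inside the quadratic remainders, $e^{\lambda t}$ and $e^{2\lambda t}$ --- you admit exactly this in your last paragraph and defer its resolution. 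Moreover, your assertion that $p$ ``remains bounded'' presupposes that $\tilde{\theta}$ decays at least at rate $\lambda$, which is precisely the conclusion to be proved; invoking $k>\lambda/H$ to ``keep every weighted remainder bounded'' is circular, since in your coordinates that condition enters nowhere in the averaged dynamics (you obtain $\dot{\tilde{\theta}}_{\mathrm{av}}=-kH\,\tilde{\theta}_{\mathrm{av}}$, in which $\lambda$ plays no role). The paper's resolution is to weight \emph{all} states: $\tilde{\theta}_f(t)=e^{\lambda t}\tilde{\theta}(t)$, $\bar{u}_f(x,t)=e^{\lambda(t+x-D)}\bar{u}(x,t)$, $\tilde{\eta}_f(t)=e^{2\lambda t}\tilde{\eta}(t)$ as in \eqref{statetransfdelay1}--\eqref{statetransfdelay2}. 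In those coordinates every exponential factor cancels identically (only constants $e^{\lambda D}$ survive), the right-hand side is exactly $2\pi/\omega$-periodic, and Hale--Lunel applies with no residual obstacle; the weighting also makes the role of \eqref{paramcond} transparent, since it contributes $+\lambda\tilde{\theta}_f$ to the dynamics and the averaged equation becomes $\dot{\tilde{\theta}}_f^{\mathrm{av}}=-(kH-\lambda)\tilde{\theta}_f^{\mathrm{av}}$, stable precisely when $k>\lambda/H$.

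Two further corrections that follow from the same omission. First, the convergence rate of $\tilde{\theta}(t)$ itself is $\lambda$ (the envelope rate), not $kH$: one has $\tilde{\theta}(t)=e^{-\lambda t}\tilde{\theta}_f(t)$ with $\tilde{\theta}_f$ converging to an $\mathcal{O}(1/\omega)$ periodic orbit, which is how the paper's Step 4 transfers stability back to \eqref{closeddelay1}--\eqref{closeddelay2}. Second, the averaged filter state does \emph{not} decay to zero: $\tilde{\eta}_f^{\mathrm{av}}$ converges to the nonzero constant $\frac{\omega_h H a^2}{4(\omega_h-2\lambda)}$ (see \eqref{etaavtildelay} and \eqref{transO}), so a Lyapunov argument aiming at exponential decay of $|\tilde{\eta}|^2$ within the averaged system is aimed at the wrong equilibrium; the decay of $\tilde{\eta}(t)$ in the original coordinates comes from the factor $e^{-2\lambda t}$ in the inverse transformation applied to a merely \emph{bounded} $\tilde{\eta}_f$, which requires $\omega_h>2\lambda$ from \eqref{paramcond}. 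Your use of the backstepping structure \eqref{tranfw}/\eqref{targPDE3} and a delay Lyapunov functional for the average $(\tilde{\theta},\bar{u})$-subsystem is a legitimate alternative to the paper's explicit transport-PDE solution in Step 3, but it cannot rescue the argument unless the averaging step is first legitimized by the full state weighting.
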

\begin{proof}
Let us proceed through the proof step by step.

\textbf{Step 1: State transformation.}  Let us consider the following transformations
\begin{align}
    \tilde{\theta}_f(t-D)={}&e^{\lambda(t-D)} \bar{u}(0,t), \label{statetransfdelay1} \\
    \bar{u}_f(x,t)={}&e^{\lambda(t+x-D)}\bar{u}(x,t), \\
    \tilde{\theta}_f(t)={}&e^{\lambda t}\tilde{\theta}(t), \label{thetildefdel} \\
    \tilde{\eta}_f(t)={}&e^{2\lambda t}\tilde{\eta}(t), \label{statetransfdelay2}
\end{align}
which transform \eqref{closeddelay1}--\eqref{closeddelay2} to the following system
\begin{align}
    \tilde{\theta}_f(t-D)={}&\bar{u}_f(0,t), \label{transfPDE1} \\
    \partial_t \bar{u}_f(x,t)={}&\partial_x \bar{u}_f(x,t), \\
    \bar{u}_f(D,t)={}&\tilde{\theta}_f(t), \\
    \dot{\tilde{\theta}}_f(t)={}&\lambda\tilde{\theta}_f(t) -k \frac{2}{a}\sin(\omega t)\frac{H}{2}\bigg[\Big(e^{\lambda D}\tilde{\theta}_f(t-D) \nonumber \\
    &+a \sin(\omega t)\Big)^2-\tilde{\eta}_f(t)\bigg]-k\left(-\frac{8}{a^2}\cos(2\omega t)\right) \nonumber \\
    &\times \frac{H}{2}\left[\left(e^{\lambda D}\tilde{\theta}_f(t-D)+a \sin(\omega t)\right)^2-\tilde{\eta}_f(t)\right] \nonumber \\
    &\times \left(\tilde{\theta}_f(t)-e^{\lambda D}\tilde{\theta}_f(t-D)\right), \\
    \dot{\tilde{\eta}}_f(t)={}& -(\omega_h-2\lambda) \tilde{\eta}_f(t)+\omega_h \frac{H}{2} \nonumber \\
    & \times \left(e^{\lambda D}\tilde{\theta}_f(t-D)+a \sin(\omega t)\right)^2, \label{transfPDE2}
\end{align}
in view of \eqref{Grewrite}--\eqref{outputdelay}.

\textbf{Step 2: Averaging operation.}
The average of the transformed system \eqref{transfPDE1}--\eqref{transfPDE2} over the period $\Pi=2\pi/\omega$ is given by
\begin{align}
    \tilde{\theta}^{\rm av}_f(t-D)={}&\bar{u}^{\rm av}_f(0,t), \\
    \partial_t \bar{u}^{\rm av}_{f}(x,t)={}&\partial_x \bar{u}^{\rm av}_{f}(x,t), \label{ubaravtx} \\
    \bar{u}^{\rm av}_f(D,t)={}&\tilde{\theta}^{\rm av}_f(t), \\
    \dot{\tilde{\theta}}^{\rm av}_f(t)={}&-(kH-\lambda)\tilde{\theta}_f^{\rm av}(t), \label{thefav}  \\
    \dot{\tilde{\eta}}^{\rm av}_f(t)={}&-(\omega_h-2\lambda) \tilde{\eta}^{\rm av}_f(t)+\omega_h \frac{H}{2} \nonumber \\
    &\times \left(e^{2\lambda D}(\tilde{\theta}^{\rm av}_f(t-D))^2+a^2/2\right), \label{etaavtildelay}
\end{align}
where $\tilde{\theta}^{\rm av}_f(t)$, $\bar{u}^{\rm av}_{f}(\cdot,t)$, and $\tilde{\eta}^{\rm av}_f(t)$ denote the average versions
of the states $\tilde{\theta}_f(t)$, $\bar{u}_{f}(\cdot,t)$, and $\tilde{\eta}_f(t)$, respectively.

\textbf{Step 3: Stability of average system.} 
The solution to \eqref{thefav} is given by 
   $ \tilde{\theta}_f^{\rm av}(t)=\tilde{\theta}_f^{\rm av}(0)e^{-(kH-\lambda)t}$.
Then, we write the solution to the PDE \eqref{ubaravtx} as 
$
    \bar{u}^{\rm av}_f(x,t)={}\tilde{\theta}_f^{\rm av}(0)e^{-(kH-\lambda)(t+x-D)}.
$
Thus, the $(\tilde{\theta}_f^{\rm av}, \bar{u}^{\rm av}_f)$-system is exponentially stable at the origin for $kH>\lambda$. 
Using this fact, it is trivial to show that $\tilde{\eta}_f^{\rm av}(t)$ of \eqref{etaavtildelay}  exponentially converges to $\frac{\omega_hHa^2}{4(\omega_h-2\lambda)}$ for $\omega_h>2\lambda$.

\textbf{Step 4: Invoking averaging theorem.} Applying the averaging theorem for infinite-dimensional systems \cite{hale1990averaging}, we establish that there exists $\bar{\omega}$ and for any $\omega > \bar{\omega}$, the transformed system \eqref{transfPDE1}--\eqref{transfPDE2} with states $(\bar{u}_f(\cdot, t), \tilde{\theta}_f(t), \tilde{\eta}_f(t))$ has a unique  exponentially stable periodic solution $(\bar{u}^{\Pi}_f(\cdot, t), \tilde{\theta}_f^{\Pi}(t), \tilde{\eta}_f^{\Pi}(t))$ of period $\Pi=2\pi/\omega$ and this solution satisfies
\begin{equation}
    \bigg(||\bar{u}^{\Pi}_f(\cdot, t)||^2+|\tilde{\theta}_f^{\Pi}(t)|^2+\bigg|\tilde{\eta}_f^{\Pi}(t)-\frac{\omega_h H a^2}{4(\omega_h-2\lambda)}\bigg|^2\bigg)^{\frac{1}{2}} \leq \mathcal{O}\Big(\frac{1}{\omega}\Big) \label{transO}
\end{equation}
for all $t \geq 0$. Considering \eqref{transO} and recalling the transformations \eqref{statetransfdelay1}--\eqref{statetransfdelay2}, we deduce that the original error system \eqref{closeddelay1}--\eqref{closeddelay2} with states $\bar{u}(\cdot,t), \tilde{\theta}(t), \tilde{\eta}(t)$ has a unique solution and is exponentially stable at the origin in the sense of the norm \eqref{normintheo}. 

\textbf{Step 5: Convergence to extremum.} 
Taking into account the results in Step 4 and recalling from \eqref{thetahatdef}--\eqref{delfdelayed}, \eqref{thetildefdel} that
\begin{align}
    \theta(t)={}e^{-\lambda t} \tilde{\theta}_f(t)+\theta^*+e^{-\lambda (t+D)} a  \sin(\omega (t+D)),
\end{align}
we conclude the exponential convergence of $\theta(t)$ to $\theta^*$. 
We establish the convergence of the output $y(t)$ to $y^*$ from \eqref{outputdelay} and complete the proof of Theorem \ref{theodelay}.
\end{proof}

\section{Unbiased ES with Diffusion PDE} \label{unbESdiff}
\begin{figure}[t]
    \centering
    \includegraphics[width=\columnwidth]{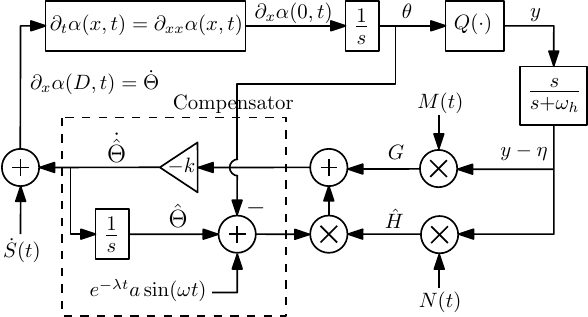} 
    \caption{Unbiased ES with diffusion PDE compensator. The design requires feedback of $\theta$, uses the same excitation signals, $M(t)$ and $N(t)$ as in Fig. \ref{ESDeBlock}, and employs a properly designed perturbation signal, $\dot{S}(t)$.}
    \label{ESheaBlock}
\end{figure}
In this section, we consider the following cascade of a diffusion PDE and ODE (integrator) with Neumann interconnection
\begin{align}
    \dot{\theta}(t)={}&\partial_x \alpha(0,t), \label{Thetadot} \\
    \partial_t \alpha(x,t)={}&\partial_{xx} \alpha(x,t), \\
    \alpha(0,t)={}&0, \\
    \partial_x \alpha(D,t)={}&\dot{\Theta}(t) \label{thetadot}
\end{align}
for $(x,t) \in (0, D) \times [0, \infty)$.
The output of the static map is 
\begin{align}
    y(t)={}y^*+\frac{H}{2}(\theta(t)-\theta^*)^2. \label{outheat}
\end{align}
The diffusion PDE with Neumann actuation arises in thermal systems, such as Stefan models of thermal phase change \cite{koga2020laser}, tubular reactors \cite{bovskovic2002backstepping}, and batteries \cite{moura2012pde}, where the control input is the heat flux. Our methodology can be extended to systems with Dirichlet actuation.
The unbiased ES with diffusion PDE compensator is schematically illustrated in Fig. \ref{ESheaBlock}.

\subsection{Perturbation signal}
Let us define the following parameter estimates to determine the optimal unknown actuator $\theta^*$
\begin{align}
    \hat{\theta}(t)={}&\theta(t)-e^{-\lambda t}a\sin(\omega t), \label{TheHat} \\
    \hat{\Theta}(t)={}&\Theta(t)-S(t).
\end{align}
We need to redesign the perturbation signal, ${S}(t)$,
such that when applied as input to the diffusion PDE
it produces the desired output, $e^{-\lambda t}a\sin(\omega t)$.
We formulate this trajectory generation problem as follows
\begin{align}
    S(t)={}&\partial_x \beta(D,t), \label{Sperturb} \\
    \partial_t \beta(x,t)={}&\partial_{xx} \beta(x,t), \label{betatxx} \\
    \beta(0,t)={}&0, \label{beta0} \\
    \partial_x \beta(0,t)={}&e^{-\lambda t} a\sin(\omega t) \label{betax0}
\end{align}
for $(x,t) \in (0, D) \times [0, \infty)$. We present the solution to $S(t)$ in the following lemma.
\begin{lemma} \label{lemS}
The explicit solution to $S(t)$ in \eqref{Sperturb} is
\begin{equation}
    S(t)=\frac{a}{2}e^{-\lambda t}\left(\sin(\omega t+qD)e^{pD}+\sin(\omega t-qD)e^{-pD}\right), \label{Sexplicit}
\end{equation}
where $p=\sqrt{\frac{\sqrt{\lambda^2+\omega^2}-\lambda}{2}}$ and $q=\sqrt{\frac{\sqrt{\lambda^2+\omega^2}+\lambda}{2}}$.
\end{lemma}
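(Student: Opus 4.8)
The plan is to treat this as a motion-planning (trajectory-generation) problem for the heat equation, parametrizing the PDE state by the prescribed boundary datum at $x=0$ and reading off $S(t)=\partial_x\beta(D,t)$ at the far end $x=D$. First I would complexify the Neumann datum \eqref{betax0}, writing $e^{-\lambda t}a\sin(\omega t)=\mathrm{Im}\!\left(a\,e^{st}\right)$ with the complex frequency $s=-\lambda+i\omega$, and seek $\beta$ as the imaginary part of a separated solution $\beta(x,t)=\phi(x)e^{st}$. Substituting into the heat equation \eqref{betatxx} collapses it to the ordinary differential equation $\phi''(x)=s\,\phi(x)$, whose general solution is a linear combination of $e^{\pm\sqrt{s}\,x}$.

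Next I would impose the two left-boundary conditions. The Dirichlet condition \eqref{beta0}, $\phi(0)=0$, selects the $\sinh$ branch, $\phi(x)=C\sinh(\sqrt{s}\,x)$, and matching the datum \eqref{betax0} requires $\phi'(0)=a$, which fixes $C=a/\sqrt{s}$, so $\phi(x)=\frac{a}{\sqrt{s}}\sinh(\sqrt{s}\,x)$. Evaluating the flux at the right end then gives the compact complex form $S(t)=\partial_x\beta(D,t)=\mathrm{Im}\!\left(a\,e^{st}\cosh(\sqrt{s}\,D)\right)$, which already encodes the claimed result and only needs to be unpacked into real form.

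The remaining work is purely algebraic. I would compute the square root $\sqrt{s}=p+iq$ of $s=-\lambda+i\omega$ by solving $p^2-q^2=-\lambda$, $2pq=\omega$ together with $p^2+q^2=|s|=\sqrt{\lambda^2+\omega^2}$; adding and subtracting the first and third relations reproduces exactly the expressions $p=\sqrt{(\sqrt{\lambda^2+\omega^2}-\lambda)/2}$ and $q=\sqrt{(\sqrt{\lambda^2+\omega^2}+\lambda)/2}$ stated in the lemma. Expanding $\cosh\!\left((p+iq)D\right)=\cosh(pD)\cos(qD)+i\sinh(pD)\sin(qD)$ and $e^{st}=e^{-\lambda t}(\cos\omega t+i\sin\omega t)$, I would take the imaginary part of their product and collect terms into $e^{-\lambda t}\bigl[\cosh(pD)\cos(qD)\sin\omega t+\sinh(pD)\sin(qD)\cos\omega t\bigr]$. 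A final product-to-sum rearrangement, using $2\cosh(pD)=e^{pD}+e^{-pD}$ and $2\sinh(pD)=e^{pD}-e^{-pD}$ together with the angle-addition formulas for $\sin(\omega t\pm qD)$, converts this into the asserted closed form \eqref{Sexplicit}.

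The only delicate point, and the place where I expect the main bookkeeping effort, is the complex square-root step: one must keep the branch $\sqrt{s}=p+iq$ with $p,q>0$ consistent with $2pq=\omega>0$ and $p^2-q^2=-\lambda$, since the opposite branch would flip signs inside the hyperbolic and trigonometric factors. Fortunately $\cosh$ is even, so $\cosh(\sqrt{s}\,D)$ — and hence $S(t)$ — is independent of the branch choice, which makes the identity robust and the verification routine once the expansion is carried out.
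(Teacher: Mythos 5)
Your proposal is correct and takes essentially the same route as the paper's proof in Appendix A: both complexify the Neumann datum as $\mathrm{Im}\{a\,e^{(-\lambda+j\omega)t}\}$, solve the sideways problem from $x=0$ to reach $\beta(x,t)=\mathrm{Im}\{\tfrac{a}{\sqrt{s}}\sinh(\sqrt{s}\,x)\,e^{st}\}$ with $s=-\lambda+j\omega$, and then evaluate $S(t)=\partial_x\beta(D,t)=\mathrm{Im}\{a\cosh(\sqrt{s}\,D)e^{st}\}$ with $\sqrt{s}=p+jq$ to recover \eqref{Sexplicit}. The only cosmetic difference is that the paper derives the $\sinh$ form by summing the power series $\beta(x,t)=\sum_{k}c_k(t)x^k/k!$ via the recursion $c_{k+2}=\dot{c}_k$, whereas you posit the separated ansatz $\phi(x)e^{st}$ and solve $\phi''=s\phi$ with $\phi(0)=0$, $\phi'(0)=a$ directly --- the paper's series is precisely the Taylor expansion of your closed form, so the two arguments coincide.
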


The proof of Lemma \ref{lemS} is given in Appendix A. 

Defining the error variables
\begin{align}
    \tilde{\theta}(t)={}&\hat{\theta}(t)-\theta^*, \qquad \tilde{\Theta}(t)={}\hat{\Theta}(t)-\theta^*, \label{errorsheat}
\end{align}
we write the following error system
\begin{align}
    \tilde{\theta}(t)={}&\partial_x \bar{u}(0,t), \label{errheat1} \\
    \partial_t \bar{u}(x,t)={}&\partial_{xx} \bar{u}(x,t), \\
    \bar{u}(0,t)={}&0, \\
    \partial_x \bar{u}(D,t)={}&\tilde{\Theta}(t), \label{errheat2}
\end{align}
where the time derivative is given by
\begin{align}
    \dot{\hat{\theta}}(t)={}&\partial_x {u}(0,t), \label{Thetahatdot} \\
    \partial_t u(x,t)={}&\partial_{xx} u(x,t), \\
    u(0,t)={}&0, \\
    \partial_x u(D,t)={}&\dot{\hat{\Theta}}(t), \label{thetahatdot}
\end{align}
by noting that $u(x,t)=\partial_t \bar{u}(x,t)=\alpha(x,t)-\partial_t \beta(x,t)$ for $(x, t) \in (0, D) \times [0, \infty)$ and recalling the cascades \eqref{Thetadot}--\eqref{thetadot}, \eqref{Sperturb}--\eqref{betax0} and the solution of $\beta(x,t)$ in \eqref{betaxtsoluti}.

\subsection{Parameter estimator design and error dynamics}
We consider the following backstepping transformation
\begin{equation}
    w(x,t)={}u(x,t)+\int_0^x q(x,r)u(r,t)dr+\gamma(x)\tilde{\theta}(t), \label{backstepheat}
\end{equation}
with the gain kernels $q(x,r)=\bar{k}(x-r)$ and $\gamma(x)=\bar{k}x$, $\bar{k}>0$, which transform the cascade \eqref{Thetahatdot}--\eqref{thetahatdot} into the target system
\begin{align}
    \dot{\tilde{\theta}}(t)={}&-\bar{k} \tilde{\theta}(t)+\partial_x w(0,t), \\
    \partial_t w(x,t)={}&\partial_{xx} w(x,t), \quad x\in (0, D), \\
    w(0,t)={}&0, \\
    \partial_x w(D,t)={}&0, \label{wxDt}
\end{align}
by noting $\dot{\tilde{\theta}}(t)=\dot{\hat{\theta}}(t)$ from \eqref{errorsheat}. Taking the derivative of \eqref{backstepheat} with respect to $x$, setting $x=D$ in the resulting expression, and recalling \eqref{wxDt}, we derive the update law as 
\begin{align}
    \dot{\hat{\Theta}}(t)={}&-\bar{k}\tilde{\theta}(t)-\bar{k} \int_0^D  u(r,t)dr, \nonumber  \\
    ={}&-\bar{k}\tilde{\theta}(t)-\bar{k} \big( \hat{\Theta}(t)-\hat{\theta}(t)\big). \label{Updeheat}
\end{align}
In \eqref{Updeheat}, we apply the property that $u(x,t)=\partial_t \bar{u}(x,t)=\partial_{xx} \bar{u}(x,t)$, and use \eqref{errorsheat}, \eqref{errheat1}, and \eqref{errheat2}. However, \eqref{Updeheat} requires direct measurement of $\tilde{\theta}(t)$. As discussed in the text following \eqref{UpdLaw}, we can redesign \eqref{Updeheat} as follows
\begin{equation}
    \dot{\hat{\Theta}}(t)={}-k G(t)-k \hat{H}(t) \Big( \hat{\Theta}(t)-\theta(t)+ e^{-\lambda t}a\sin(\omega t)\Big) \label{updHeat}
\end{equation}
by recalling \eqref{TheHat}, using the feedback of $\theta(t)$, and choosing $\bar{k}=kH$ where $H > 0$ represents the unknown Hessian  and $k > 0$ is the user-defined gain. The estimates $G(t)$, $\hat{H}(t)$, and $\eta$-dynamics have the same form as in \eqref{Gdef}, \eqref{Hdef} and \eqref{etadyn}, respectively. Additionally, the parameters should satisfy the following conditions
\begin{equation}
    \lambda<\min\left\{\frac{\omega_h}{2}, \frac{\pi^2}{4D^2}\right\}, \qquad k> \frac{\lambda}{H}. \label{paramheat}
\end{equation}
Using \eqref{etadyn}, \eqref{etatilde}, \eqref{TheHat}, \eqref{errorsheat}--\eqref{errheat2}, and \eqref{updHeat}, we write the following closed-loop error system
\begin{align}
    \tilde{\theta}(t)={}&\partial_x \bar{u}(0,t), \label{closedheat1} \\
    \partial_t \bar{u}(x,t)={}&\partial_{xx} \bar{u}(x,t), \\
    \bar{u}(0,t)={}&0, \\
    \partial_x \bar{u}(D,t)={}&\tilde{\Theta}(t), \\
    \dot{\tilde{\Theta}}(t)={}&-kG(t)-k\hat{H}(t)\big(\tilde{\Theta}(t)-\tilde{\theta}(t)\big), \\
    \dot{\tilde{\eta}}(t)={}& -\omega_h \tilde{\eta}(t)+\omega_h  \left(y(t)-y^*\right), \label{closedheat2} 
\end{align}
where $G(t)$ and $\hat{H}(t)$ are rewritten in the same form as \eqref{Grewrite} and \eqref{Hrewrite}, respectively. Using \eqref{TheHat}, \eqref{errorsheat}, the output \eqref{outheat} is rewritten as 
\begin{equation}
    y(t)={}y^*+\frac{H}{2}(\tilde{\theta}(t)+a\sin(\omega t))^2.
\end{equation}

\subsection{Stability analysis}
The main theorem is stated as follows:
\begin{theorem} \label{theodiff}
Let Assumption 1 holds and parameters satisfy \eqref{paramheat}. Then,
there exists $\bar{\omega}$ and for any $\omega > \bar{\omega}$, the closed-loop system \eqref{closedheat1}--\eqref{closedheat2} exponentially stable at the origin in the sense of the norm
\begin{align}
    \left( ||\bar{u}(\cdot, t)||^2+|\tilde{\Theta}(t)|^2+\left|\tilde{\eta}(t)\right|^2 \right)^{1/2}. \label{theorem2norm}
\end{align}
Furthermore,  
the input $\theta(t)$ and output $y(t)$ exponentially converge to $\theta^*$ and $y^*$, respectively.
\end{theorem}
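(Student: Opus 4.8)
The plan is to follow the five-step architecture of the proof of Theorem~\ref{theodelay}, re-deriving each step for the parabolic (rather than transport) dynamics of \eqref{closedheat1}--\eqref{closedheat2}.

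\textbf{Step 1 (state transformation).} I would first introduce the exponentially weighted variables
\begin{align*}
    \bar{u}_f(x,t) &= e^{\lambda t}\bar{u}(x,t), & \tilde{\theta}_f(t) &= e^{\lambda t}\tilde{\theta}(t), \\
    \tilde{\Theta}_f(t) &= e^{\lambda t}\tilde{\Theta}(t), & \tilde{\eta}_f(t) &= e^{2\lambda t}\tilde{\eta}(t).
\end{align*}
Because this weighting is spatially uniform, the Dirichlet--Neumann boundary data and the collocation identity $\tilde{\theta}=\partial_x\bar{u}(0,\cdot)$ transfer verbatim, while the diffusion equation acquires a reaction term, $\partial_t\bar{u}_f=\partial_{xx}\bar{u}_f+\lambda\bar{u}_f$. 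The key cancellation is that, with the decaying dither, $y-y^*-\tilde{\eta}=e^{-2\lambda t}\big[\tfrac{H}{2}(\tilde{\theta}_f+a\sin(\omega t))^2-\tilde{\eta}_f\big]$, so the $e^{\lambda t}$ and $e^{2\lambda t}$ growth of $M$ and $N$ is exactly absorbed and the transformed $\tilde{\Theta}_f$- and $\tilde{\eta}_f$-equations retain only bounded, $\Pi$-periodic coefficients together with a $+\lambda\tilde{\Theta}_f$ (resp.\ $+2\lambda\tilde{\eta}_f$) shift, in direct analogy with \eqref{transfPDE1}--\eqref{transfPDE2}.

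\textbf{Steps 2--3 (averaging and stability of the average).} Next I would average over $\Pi=2\pi/\omega$ with the slow states frozen. The period-averages of the trigonometric factors (with $\langle\sin(\omega t)(\cdot)^2\rangle$ extracting the gradient and $\langle\cos(2\omega t)(\cdot)^2\rangle=-a^2/4$ yielding $\langle\hat{H}\rangle=H$) collapse the update equation to the scalar $\dot{\tilde{\Theta}}^{\mathrm{av}}_f=-(kH-\lambda)\tilde{\Theta}^{\mathrm{av}}_f$, while the PDE averages to $\partial_t\bar{u}^{\mathrm{av}}_f=\partial_{xx}\bar{u}^{\mathrm{av}}_f+\lambda\bar{u}^{\mathrm{av}}_f$ with Neumann input $\tilde{\Theta}^{\mathrm{av}}_f$, and $\tilde{\eta}^{\mathrm{av}}_f$ is forced by $\omega_h\tfrac{H}{2}\big((\tilde{\theta}^{\mathrm{av}}_f)^2+a^2/2\big)$. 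For $kH>\lambda$ the scalar $\tilde{\Theta}^{\mathrm{av}}_f$ decays exponentially; feeding it as an exponentially small Neumann input, I would close a Lyapunov estimate on $V=\tfrac{1}{2}\|\bar{u}^{\mathrm{av}}_f\|^2$ using the Poincar\'e inequality $\|\partial_x v\|^2\ge\tfrac{\pi^2}{4D^2}\|v\|^2$ (valid for $v(0)=0$), obtaining $\dot{V}\le-\big(\tfrac{\pi^2}{4D^2}-\lambda\big)\|\bar{u}^{\mathrm{av}}_f\|^2+(\text{exponentially decaying boundary term})$. Here the hypothesis $\lambda<\pi^2/(4D^2)$ in \eqref{paramheat} is precisely what renders the reaction term subordinate to diffusion, so that $\bar{u}^{\mathrm{av}}_f$---and hence $\tilde{\theta}^{\mathrm{av}}_f=\partial_x\bar{u}^{\mathrm{av}}_f(0,\cdot)$---decays exponentially; $\tilde{\eta}^{\mathrm{av}}_f$ then converges to $\tfrac{\omega_h H a^2}{4(\omega_h-2\lambda)}$ provided $\omega_h>2\lambda$.

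\textbf{Steps 4--5 (averaging theorem and convergence).} Finally I would invoke the infinite-dimensional averaging theorem \cite{hale1990averaging} to lift exponential stability of the averaged equilibrium to a unique, exponentially stable $\Pi$-periodic solution of the transformed system lying within $\mathcal{O}(1/\omega)$ of it, exactly as in \eqref{transO}; undoing the exponential weights restores exponential stability of \eqref{closedheat1}--\eqref{closedheat2} at the origin in the norm \eqref{theorem2norm}. Writing $\theta(t)=e^{-\lambda t}\tilde{\theta}_f(t)+\theta^*+e^{-\lambda t}a\sin(\omega t)$ and reading $y$ from \eqref{outheat} then gives exponential convergence of $\theta$ to $\theta^*$ and of $y$ to $y^*$. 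The genuinely new difficulty relative to Theorem~\ref{theodelay} is Step~3: the transport equation there admitted an explicit characteristic solution, whereas here the time weighting leaves the reaction--diffusion generator $\partial_{xx}+\lambda$, whose stability is not automatic. The crux is the spectral/Lyapunov argument identifying $\pi^2/(4D^2)$ as the first Dirichlet--Neumann eigenvalue of $-\partial_{xx}$ on $[0,D]$, so that $\lambda<\pi^2/(4D^2)$ keeps the reaction term dominated by diffusion, together with an ISS-type trace estimate handling the exponentially decaying Neumann coupling $\tilde{\Theta}^{\mathrm{av}}_f$.
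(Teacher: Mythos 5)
Your proposal follows the paper's five-step architecture almost verbatim: the same exponentially weighted transformation, the same averaged system with $\dot{\tilde{\Theta}}^{\rm av}_f=-(kH-\lambda)\tilde{\Theta}^{\rm av}_f$ and the $\tilde{\eta}^{\rm av}_f$ offset $\frac{\omega_h H a^2}{4(\omega_h-2\lambda)}$, the same invocation of the infinite-dimensional averaging theorem, and the same unwinding of the weights in Steps 4--5. (Incidentally, your ``$+\lambda\tilde{\Theta}_f$ shift'' is the consistent reading; the paper's transformed equation displays $\lambda\tilde{\theta}_f(t)$, apparently a typo, since its own averaged equation \eqref{thetildotavheat} agrees with your version.) The only substantive divergence is Step 3. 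The paper solves the averaged reaction--diffusion system \eqref{uavfpde1}--\eqref{uavfpde2} \emph{exactly} by separation of variables, see \eqref{ubarexsol}: a particular solution proportional to $\sin(\sqrt{kH}\,x)\,e^{-(kH-\lambda)t}$ tracking the decaying Neumann input plus an eigenfunction series with rates $\lambda-\frac{\pi^2(2n-1)^2}{4D^2}$. You instead run an $L_2$ Lyapunov/ISS estimate with the Poincar\'e constant $\frac{\pi^2}{4D^2}$ for functions vanishing at $x=0$, splitting the boundary term $\bar{u}^{\rm av}_f(D,t)\tilde{\Theta}^{\rm av}_f(t)$ off as an exponentially decaying forcing. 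Both routes isolate exactly the conditions $kH>\lambda$ and $\lambda<\frac{\pi^2}{4D^2}$ of \eqref{paramheat}; your argument is more robust (it survives perturbations of the generator admitting no closed-form solution), while the paper's explicit solution buys pointwise and trace information for free.

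That last point is where your write-up has a genuine gap. The estimate $\dot{V}\le -cV+(\text{exponentially decaying term})$ yields decay of $\|\bar{u}^{\rm av}_f(\cdot,t)\|$ in $L_2$ only, and this does \emph{not} imply decay of the boundary trace $\tilde{\theta}^{\rm av}_f(t)=\partial_x \bar{u}^{\rm av}_f(0,t)$: the word ``hence'' in ``$\bar{u}^{\rm av}_f$---and hence $\tilde{\theta}^{\rm av}_f$---decays exponentially'' is doing unsupported work, since a pointwise derivative at the boundary is not controlled by the spatial $L_2$ norm. This trace decay is not decorative: it is needed to show $\tilde{\eta}^{\rm av}_f\to\frac{\omega_h H a^2}{4(\omega_h-2\lambda)}$ (the forcing in \eqref{etafavheat} is $(\tilde{\theta}^{\rm av}_f)^2$) and it underlies the convergence of $\theta(t)$ in Step 5. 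To close it you would need an additional layer: a second Lyapunov functional at the $H^1$/$H^2$ level (estimate $\|\partial_x\bar{u}^{\rm av}_f\|$ and $\|\partial_{xx}\bar{u}^{\rm av}_f\|$, then bound $|\partial_x\bar{u}^{\rm av}_f(0,t)|$ by Agmon's inequality), or a parabolic-smoothing argument for the analytic semigroup generated by $\partial_{xx}+\lambda$, or simply the paper's explicit solution \eqref{ubarexsol}, from which the paper reads off the decay of $\partial_x\bar{u}^{\rm av}_f(0,t)$ directly, as it notes after \eqref{ubarexsol}. With that repair, the remainder of your proposal (the averaging computations, including $\langle \hat{H}\rangle = H$, and Steps 4--5) matches the paper's proof.
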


\begin{proof}
Let us proceed through the proof step by step.

\textbf{Step 1: State transformation.}  Let us consider the following transformations    
\begin{align}
    \tilde{\theta}_f(t)={}&e^{\lambda t} \partial_x \bar{u}(0,t), \label{transheat1} \\
    u_f(x,t)={}&e^{\lambda t} \bar{u}(x,t), \\
    \tilde{\Theta}_f(t)={}&e^{\lambda t}\tilde{\Theta}(t), \\
    \tilde{\eta}_f(t)={}&e^{2\lambda t}\tilde{\eta}(t), \label{transheat2}
\end{align}
which transform \eqref{closedheat1}--\eqref{closedheat2} to the following system
\begin{align}
    \tilde{\theta}_f(t)={}&\partial_x \bar{u}_f(0,t), \label{transfhpde0}  \\
    \partial_t \bar{u}_f(x,t)={}&\partial_{xx} \bar{u}_f(x,t)+\lambda u_f(x,t), \label{transfhpde1} \\
    \bar{u}_f(0,t)={}&0, \\
    \partial_x \bar{u}_f(D,t)={}&\tilde{\Theta}_f(t), \\
    \dot{\tilde{\Theta}}_f(t)={}&\lambda\tilde{\theta}_f(t)  -k \frac{2}{a}\sin(\omega t)\frac{H}{2}\Big[\Big(\tilde{\theta}_f(t)+a \sin(\omega t)\Big)^2 \nonumber \\
    -{}&\tilde{\eta}_f(t)\Big]-k\left(-\frac{8}{a^2}\cos(2\omega t)\right)\frac{H}{2}\Big[\Big(\tilde{\theta}_f(t) \nonumber \\
    +&{}a \sin(\omega t)\Big)^2-\tilde{\eta}_f(t)\Big]  \Big(\tilde{\Theta}_f(t)-\tilde{\theta}_f(t)\Big), \\
    \dot{\tilde{\eta}}_f(t)={}& -(\omega_h-2\lambda) \tilde{\eta}_f(t)\nonumber \\
    &+\omega_h \frac{H}{2}\big[\tilde{\theta}_f(t)+a \sin(\omega t)\big]^2. \label{transfhpde2}
\end{align}

\textbf{Step 2: Averaging operation.} 
The average of the transformed system \eqref{transfhpde0}--\eqref{transfhpde2} over the period $\Pi=2\pi/\omega$ is given by
\begin{align}
    \tilde{\theta}^{\rm av}_f(t)={}&\partial_x \bar{u}^{\rm av}_f(0,t), \label{Thefavheat} \\
    \partial_t \bar{u}^{\rm av}_{f}(x,t)={}&\partial_{xx} \bar{u}^{\rm av}_{f}(x,t)+\lambda u^{\rm av}_f(x,t), \label{uavfpde1} \\
    \bar{u}^{\rm av}_f(0,t)={}&0, \\
    \partial_x \bar{u}^{\rm av}_f(D,t)={}&\tilde{\Theta}^{\rm av}_f(t), \label{uavfpde2}\\ 
    \dot{\tilde{\Theta}}^{\rm av}_f(t)={}&-(kH-\lambda)\tilde{\Theta}_f^{\rm av}(t),  \label{thetildotavheat}  \\
    \dot{\tilde{\eta}}^{\rm av}_f(t)={}&-(\omega_h-2\lambda) \tilde{\eta}^{\rm av}_f(t) \nonumber \\
    &+\omega_h \frac{H}{2} \left( (\tilde{\theta}^{\rm av}_f(t))^2+ \frac{a^2}{2} \right), \label{etafavheat}
\end{align}
where $\tilde{\theta}^{\rm av}_f(t)$, $\bar{u}^{\rm av}_{f}(\cdot,t)$, $\tilde{\Theta}_f^{\rm av}(t)$, and $\tilde{\eta}^{\rm av}_f(t)$ denote the average versions
of the states $\tilde{\theta}_f(t)$, $\bar{u}_{f}(\cdot,t)$, $\tilde{\Theta}_f(t)$, and $\tilde{\eta}_f(t)$, respectively.

\textbf{Step 3: Stability of average system.} The solution to \eqref{thetildotavheat} is given by 
$
    \tilde{\Theta}_f^{\rm av}(t)=\tilde{\Theta}_f^{\rm av}(0)e^{-(kH-\lambda)t}.
$
Then, using the method of separation of variables, we obtain the exact solution to the reaction-diffusion equation \eqref{uavfpde1}--\eqref{uavfpde2} as 
\begin{align}
    \bar{u}^{\rm av}_{f}(x,t)=&\left(\frac{\tilde{\Theta}_f^{\rm av}(0)}{\sqrt{kH}\cos(\sqrt{kH}D)}\right)  \sin\left(\sqrt{kH}x\right)e^{-(kH-\lambda)t} \nonumber \\
    &+\sum_{n=1}^{\infty}e^{\left(\lambda-\frac{\pi^2 (2n-1)^2}{4D^2}\right)t}\sin\left(\frac{\pi (2n-1) x}{2D}\right) M_n, \label{ubarexsol}
\end{align}
where $M_n=\frac{1}{D}\int_0^{2D} \bar{u}_f^{\rm av}(x,0) \sin\left(\frac{\pi (2n-1) x}{2D}\right)dx$.
Thus, the $(\tilde{\Theta}_f^{\rm av}, \bar{u}^{\rm av}_f)$-system is exponentially stable at the origin for $kH>\lambda$ and $\lambda<\frac{\pi^2}{4D^2}$. We establish the exponential convergence of $\tilde{\theta}_f^{\rm av}(t)$ to zero recalling \eqref{Thefavheat} and using \eqref{ubarexsol}. Using this fact, it is trivial to show that $\tilde{\eta}_f^{\rm av}(t)$ of \eqref{etafavheat}  exponentially converges to $\frac{\omega_hH a^2}{4(\omega_h-2\lambda)}$ for $\omega_h>2\lambda$.

\textbf{Step 4: Invoking averaging theorem.} 
Applying the averaging theorem \cite{hale1990averaging}, we establish that there exists $\bar{\omega}$ and for any $\omega > \bar{\omega}$, the transformed system \eqref{transfhpde0}--\eqref{transfhpde2} with states $(\bar{u}_f(\cdot,t), \tilde{\Theta}_f(t), \tilde{\eta}_f(t))$ has a unique  exponentially stable periodic solution $(\bar{u}^{\Pi}_f(\cdot, t), \tilde{\Theta}_f^{\Pi}(t), \tilde{\eta}_f^{\Pi}(t))$ of period $\Pi=2\pi/\omega$ and this solution satisfies
\begin{equation}
    \bigg(||\bar{u}^{\Pi}_f(\cdot, t)||^2+|\tilde{\Theta}_f^{\Pi}(t)|^2+\bigg|\tilde{\eta}_f^{\Pi}(t)-\frac{\omega_h H a^2}{4(\omega_h-2\lambda)}\bigg|^2\bigg)^{\frac{1}{2}} \leq \mathcal{O}\Big(\frac{1}{\omega}\Big) \label{transheatO}  
\end{equation}
for all $t \geq 0$.
Considering \eqref{transheatO} and recalling the
transformations \eqref{transheat1}--\eqref{transheat2}, we deduce that the original error system
\eqref{closedheat1}--\eqref{closedheat2} with states $\bar{u}(\cdot,t), \tilde{\Theta}(t), \tilde{\eta}(t)$ has a unique solution and is exponentially stable at the origin in the sense of the norm \eqref{theorem2norm}. In addition, $\tilde{\theta}(t)=\partial_x \bar{u}(0,t)$ exponentially converges to zero. 

\begin{figure}[t]
    \centering
    \includegraphics[width=\columnwidth]{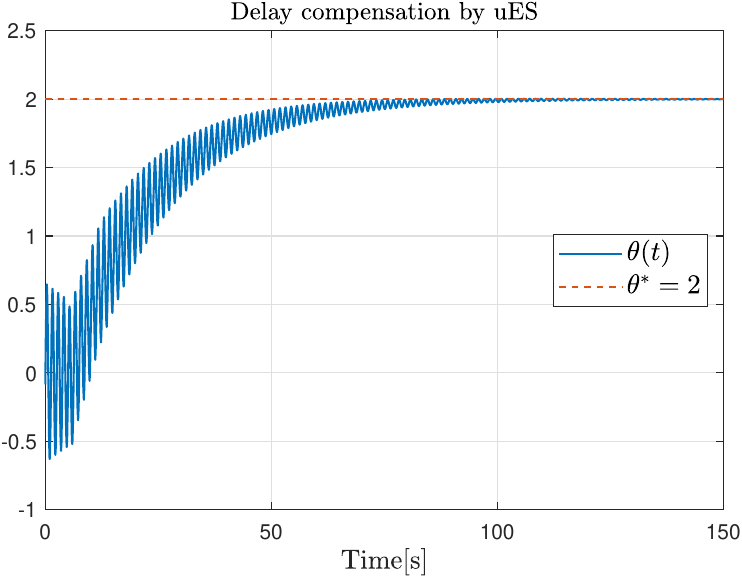} 
    \caption{The trajectory of input $\theta$ resulting from the application of the delay-compensated uES \eqref{delayuES} in the presence of a delay of $D=5$ seconds.}
    \label{DelayuESFig}
\end{figure}

\textbf{Step 5: Convergence to extremum.} 
Taking into account the results in Step 4 and recalling from \eqref{TheHat}, \eqref{errorsheat} that
\begin{align}
    \theta(t)={}e^{-\lambda t} \tilde{\theta}_f(t)+\theta^*+e^{-\lambda t}a\sin(\omega t),
\end{align}
we conclude the exponential convergence of $\theta(t)$ to $\theta^*$. 
Then, we establish the convergence of the output $y(t)$ to $y^*$ from \eqref{outheat} and complete the proof of Theorem \ref{theodiff}.
\end{proof}

\section{Numerical Simulation} \label{numsim}
In this section, we perform a numerical simulation to evaluate the performance of the developed ES algorithms. We consider the following static quadratic map
\begin{align}
    Q(\theta)={}1+(\theta-2)^2. \label{mapforsim}
\end{align}

In the first scenario, we examine a case where the map \eqref{mapforsim} is measured with a known delay of $D=5$. 
We implement the delay-compensated uES \eqref{delayuES} with parameters $k=0.03, a=0.8, \omega=5, \omega_h=1, \lambda=0.04$. All initial conditions are set to zero. As illustrated in Fig. \ref{DelayuESFig}, the delay-compensated uES algorithm effectively compensates for the delay and ensures unbiased convergence of the input $\theta$ to its optimum value $\theta^*=2$ exponentially at a rate of $\lambda$.

In the second scenario, we examine a case where the map \eqref{mapforsim} is coupled with a diffusion PDE \eqref{Thetadot}--\eqref{thetadot}, with $D=1$. 
We employ the diffusion PDE-compensated uES \eqref{updHeat} with parameters identical to those used in the delay-compensated design, and depict the result in Figure \ref{DiffuESFig}.
Our approach effectively compensates for the diffusion PDE dynamics and achieves exponential convergence to the optimum $\theta^*=2$.

\begin{figure}[t]
    \centering
    \includegraphics[width=\columnwidth]{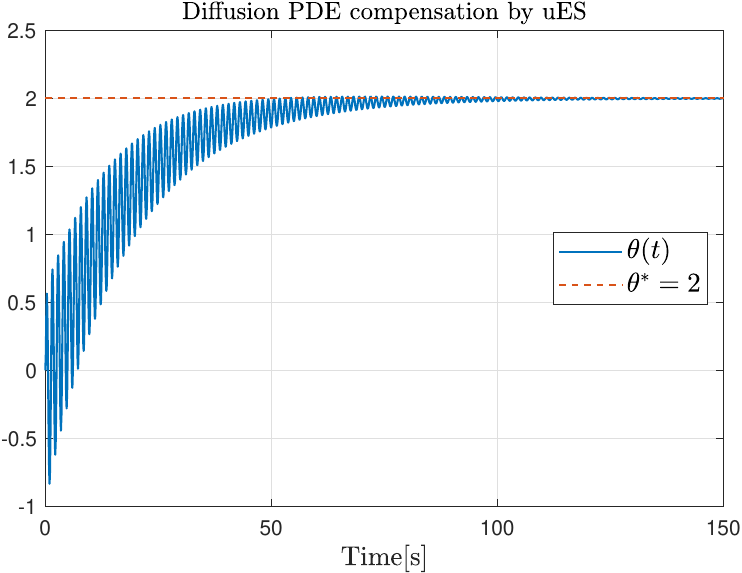} 
    \caption{The trajectory of input $\theta$ resulting from the application of the diffusion PDE-compensated uES \eqref{updHeat}.}
    \label{DiffuESFig}
\end{figure}

\section{Conclusion} \label{conclusion}
Motivated by the need to address optimization challenges in complex physical systems described by PDEs, we introduce two distinct uES algorithms designed to handle PDE dynamics. One algorithm addresses arbitrarily long input-output delays, while the other deals with diffusion PDE input dynamics. Unlike existing designs that use constant amplitude dither signals, we use additive and multiplicative dither signals with exponentially decaying and growing amplitudes, respectively. By carefully adjusting the design parameters, we achieve unbiased convergence at a user-defined exponential rate. A promising direction for future research would be to explore the application of this approach for seeking the critical heat flux in pool boiling systems.




\section*{APPENDIX}

\subsection{Proof of Lemma \ref{lemS}}
Following the PDE-based motion planning technique presented in \cite[Ch. 12]{krstic2008boundary}, we seek the solution in the following form
\begin{align}
    \beta(x,t)={}&\sum_{k=0}^{\infty} c_k(t) \frac{x^k}{k!}, \label{betaxtsol}
\end{align}
where $c_k(t)$ are the time-varying coefficients. To determine these coefficients, we first use the boundary conditions \eqref{beta0} and \eqref{betax0}, and obtain
\begin{align}
    c_0(t)={}&0, \label{c0} \\
    c_1(t)={}&e^{-\lambda t}a\sin(\omega t)={}\text{Im}\left\{a e^{(-\lambda+j\omega)t}\right\}. \label{c1}
\end{align}
Next, we substitute \eqref{betaxtsol} into \eqref{betatxx}, resulting in the recursive relationship $c_{k+2}(t)=\dot{c}_k(t)$. From \eqref{c0} and \eqref{c1}, we get
\begin{align}
    c_{2k}(t)={}&0, \\
    c_{2k+1}(t)={}&\text{Im}\left\{a(-\lambda+j\omega)^{k} e^{(-\lambda+j\omega)t}\right\}. \label{c2k1}
\end{align}
Substituting \eqref{c2k1} into \eqref{betaxtsol}, we get
\begin{align}
    \beta(x,t)={}&\text{Im}\left\{\sum_{k=0}^{\infty} a(-\lambda+j\omega)^{k}\frac{x^{2k+1}}{(2k+1)!} e^{(-\lambda+j\omega)t}\right\}, \nonumber \\
    ={}&\text{Im}\left\{\sum_{k=0}^{\infty} \frac{a\big(\sqrt{(-\lambda+j\omega)}x\big)^{2k+1}}{\sqrt{(-\lambda+j\omega)}(2k+1)!}e^{(-\lambda+j\omega)t} \right\}, \nonumber \\
    ={}&\text{Im}\left\{ \frac{a}{\sqrt{-\lambda+j\omega}} \sinh(\sqrt{(-\lambda+j\omega)}x) e^{(-\lambda+j\omega)t} \right\}, \nonumber \\
    ={}&\text{Im}\Big\{ \left(e^{px-\lambda t+j(qx+\omega t)}-e^{-px-\lambda t+j(-qx+\omega t)} \right)\nonumber \\
    &+a(p-jq)/(2p^2+2q^2) \Big\},
\end{align}
where $p+jq=\sqrt{(-\lambda+j\omega)}$ and $p$ and $q$ are  defined after \eqref{Sexplicit}.
Finally, we arrive at
\begin{align}
    \beta(x,t)={}&\big[\left(p\sin(\omega t+qx)-q\cos(\omega t+qx)\right)e^{px-\lambda t} \nonumber \\
    &-\left(p\sin(\omega t-qx)-q\cos(\omega t-qx)\right)e^{-px-\lambda t}\big] \nonumber \\
    &\times a/(2p^2+2q^2). \label{betaxtsoluti}
\end{align}
Differentiating \eqref{betaxtsoluti} with respect to $x$ and setting $x = D$, we derive \eqref{Sexplicit}. \hfill $\blacksquare$


\end{document}